\begin{document}

\author[S. Das]{Souvik Das\orcidlink{0000-0001-6918-6219}} 
\author[S. Ganguly]{Siddhartha Ganguly\orcidlink{0000-0003-2046-2061}} 
\author[A. Aravind]{Ashwin Aravind\orcidlink{0000-0002-6412-5772}} 
\author[D. Chatterjee]{Debasish Chatterjee\orcidlink{0000-0002-1718-653X}}

\title[Data-driven distributionally robust MPC]{Data-driven distributionally robust MPC for systems with multiplicative noise: A semi-infinite semi-definite programming approach}
\thanks{S. Das, S. Ganguly, A. Aravind, and D. Chatterjee are with \faGroup\ Centre for Systems and Control, \faUniversity\ Indian Institute of Technology, Mumbai, 400076, India.  \\
	(SD): \faEnvelope\ \texttt{souvikd@iitb.ac.in}, \faHome\ \url{https://sites.google.com/view/souvikd}.\\
  \noindent (SG): \faEnvelope\ \texttt{sganguly@iitb.ac.in}, \faHome\ \url{https://sites.google.com/view/siddhartha-ganguly}.\\
  \noindent (AA): \faEnvelope\ \texttt{a.aravind@iitb.ac.in}, \faHome\ \url{http://www.sc.iitb.ac.in/~ashwinaravind}.\\
(DC): \faEnvelope\ \texttt{dchatter@iitb.ac.in}, \faHome\ \url{http://www.sc.iitb.ac.in/~chatterjee}.
}
\thanks{This work was partially supported by Ministry of Human Resource Development, Govt. of India. D. Chatterjee acknowledges partial support of the SERB MATRICS grant MTR/2022/000656.}

\thanks{SG thanks Prof. P. Goulart for identifying an error in equation (5) during the review of his Ph.D. thesis.}

\begin{abstract}     
This article introduces a novel distributionally robust model predictive control (DRMPC) algorithm for a specific class of controlled dynamical systems where the disturbance multiplies the state and control variables. These classes of systems arise in mathematical finance, where the paradigm of distributionally robust optimization (DRO) fits perfectly, and this serves as the primary motivation for this work. We recast the optimal control problem (OCP) as a semi-definite program with an infinite number of constraints, making the ensuing optimization problem a \emph{semi-infinite semi-definite program} (\(\sisdp\)). To numerically solve the \(\sisdp\), we advance an approach established in \cite{ref:DasAraCheCha-22} in the context of convex semi-infinite programs (SIPs) to \(\sisdp\)s and subsequently, solve the DRMPC problem. A numerical example is provided to show the effectiveness of the algorithm.
\end{abstract}

\maketitle
\section{Introduction}
\label{s:intro}

This article focuses on the technique of model predictive control (MPC) of uncertain stochastic dynamical systems where the uncertainties (we use the terms uncertainties and disturbances interchangeably) multiply the system state and control variables --- these classes of systems arise naturally in applications related to finance such as portfolio optimization \cite{ref:JAP:ACC:portfolio}, constrained index tracking \cite{ref:JAP:SCH:index_tracking}, trading applications \cite{ref:SM:LJH:JAP:finance} etc. MPC is perhaps one of the most popular and practically deployed optimization-based control synthesis technique which has witnessed explosive growth and proliferation in several industries. The theoretical aspects of MPC, such as stability and feasibility, are quite well-developed for deterministic, robust, and stochastic systems \cite{ref:May-16} under several types of constraints, but primarily due to the nature of the synthesis technique, computational tractability remains the primary bottleneck. 

Deterministic or nominal MPC techniques are designed to deliver good performance under constraints in the absence of disturbances. Robust MPC techniques take care of the uncertainties by employing a min-max optimization problem with bounded disturbances and enforcing the state and the control constraints for all possible disturbance realizations; the ensuing optimal control problem becomes a \emph{semi-infinite} program but it applies to safety-critical applications. Stochastic MPC specifies a probability distribution \(\PP_{\dist}\) of the disturbance \(\dist\) and typically solves a chance-constrained stochastic program over a class of policies. While a small margin of distributional robustness is inherent in SMPC techniques~\cite{ref:RDM:JBR:TAC:23}, when the underlying distribution is furnished from a relatively `large' set of data sites, the resulting uncertainty is typically too large to ignore. This serves as a primary motivation to equip the SMPC enterprise with techniques from distributionally robust optimization (DRO). 

DRO techniques \cite{ref:mohajerin2018data} assume that the underlying probability distribution of the uncertain parameters belongs to a set of distributions \(\setofdist\) which we will refer to as the \emph{ambiguity set}. The ensuing optimal control problem is then posed as an \(\inf\)-\(\sup\) constrained optimization, where the \(\sup\) is applied over the ambiguity set \(\setofdist\) with the expected cost under an unknown distribution \(\unknowndist \in \setofdist\). In the context of distributionally robust MPC (DRMPC), among other things, numerical tractability is one of the key difficulties and to address this challenge, several approaches have been reported in the literature. For example, tractable formulations for linear controlled dynamical systems using state-feedback policies of the form \(u = K x + \eta\), disturbance feedback policies of the form \(u = \theta w + \eta\), along with several types of convex reformulations of chance or conditional value-at-risk type of state constraints were introduced to obtain certain convex structures; see \cite{ref:BPGVP:DK:PJG:MM:DRMPC:TAC16, ref:CM:SL:LCSS:22:DD-DRMPC, ref:BL:YT:AW:GD:TAC21:DRSMPC}. The choice of the ambiguity set also plays a crucial role in the tractability and performance of DRMPC algorithms. Several types of ambiguity sets, such as moment ambiguity sets \cite{ref:EDYY_moment_ambiguity:10}, their inner and outer approximations \cite{ref:YT:JY:WHC:SL:TAC24:DRSMPC}, and Wasserstein balls \cite{ref:YI:WassDRMPC:TAC20} have been employed in DRMPC framework, but all in the context of linear systems. We further draw attention to \cite{ref:RDM:PME:DRMPC23} where a collection of closed-loop stability results were established for stochastic linear systems with bounded noise and Gelbrich ambiguity sets. 

In the context of dynamical systems that are not necessarily linear, e.g., where the disturbance multiplies the state and the control variables, data-driven model-based algorithms relying on translating the underlying optimal control to a semi-definite program (SDP) were established in \cite{ref:PC:MS:PP_modelbased:L4DC:20} using conical ambiguity sets. The target classes of problems were unconstrained linear quadratic regulators. Leveraging tools from multi-linear tensor algebra a distributionally robust optimal control strategy was established in \cite{ref:PC:PP:TAC23:DRMPC:mult_noise} for systems with multiplicative noise. 

\noindent \textbf{Our contributions} 
\begin{itemize}[leftmargin=*]
\item In \cite{ref:DasAraCheCha-22} a framework to extract \emph{exact} solutions for convex semi-infinite programs was established. We extend this framework to the context of semi-infinite semi-definite programs (\(\sisdp\)s). Our algorithm guarantees, under mild structural assumptions that the value and the optimizers of the original \(\sisdp\) are the same as those of a suitably relaxed version of the \(\sisdp\); see \S\ref{sec:prob_form}.
\item The centerpiece of our study is a discrete-time stochastic MPC (SMPC) problem where the disturbance multiplies the state and the control variables. We introduce distributional uncertainty over certain ambiguity sets and formulate the given SMPC problem as a DRMPC. Subsequently, we translate the DRMPC into an \(\sisdp\) and apply the results established in \S\ref{sec:prob_form}. We illustrate our results with the aid of a numerical example. 
\end{itemize}

The algorithm reported herein is typically slow due to the presence of a global optimization step and one of our motivations behind this development is the usage of these results along with an explicit MPC oracle along the lines of \cite{ref:GanCha-22}. Along with the explicit MPC algorithm, stability and recursive feasibility guarantees of the online algorithm is under development, and will be reported jointly in a subsequent article.

\noindent \textbf{Notation:}
Let \((\ps,\tsigalg,\PP)\) be a (sufficiently rich) probability space, and we assume that all random elements are defined on \((\ps,\tsigalg,\PP)\). The realization of a \(d\mbox{-}\)dimensional random vector \(g\) at \(\omega \in \ps\) defined on \((\ps,\tsigalg,\PP)\) is given by \(g(\omega)\). Let \(f\) be another random variable defined on \((\ps,\tsigalg,\PP)\) taking values in some Euclidean space and \(\PP_{f}\) denotes the distribution of \(f\), i.e., \(\PP_{f}(S) = \PP(f \in S)\) for every \(S \in \tsigalg\). Moreover, we adopt the notation \(\EE_{\ol{g}} \expecof[\big]{f} = \EE \cexpecof[\big]{f\given \ol{g}}\). We further assume that \(\EE \expecof[\big]{\abs{g}^2}\) exists and is finite. In the rest of the article we omit these details for the sake of brevity, and \emph{with a slight abuse of notation and write \(g\) as the realization taking values in \(\Rbb^d\).} 
We let \(\N \Let \aset{1,2,\ldots}\) denote the set of positive integers. The vector space \(\R[d]\) is assumed to be equipped with standard inner product \(\inprod{v}{v'}\Let \sum_{j=1}^d v_j v'_j\) for every \(v,v' \in \R[d]\). For any arbitrary subset \(X \subset \Rbb^d\) we denote the interior of \(X\) by \(\intr X\). We denote the set of all \(n\times n\) matrices with real entries by \(\matr{n}\), the set of all symmetric matrices by \(\symmat{n}\), the set of all symmetric positive definite matrices by \(\pdmat{n}\), and the set of all positive semi-definite matrices by \(\psdmat{n}\). We equip the space \(\symmat{n}\) with the inner product \(\symmat{n} \times \symmat{n} \ni (A,B) \mapsto \inprod{A}{B} \Let \trace(A^{\transpose}B)= \sum_{i,j=1}^{n}a_{ij}b_{ij}\).


\section{Problem formulation}\label{sec:prob_form}

Consider the semi-infinite semi-definite program (\(\sisdp\)): 
\begin{equation}
	\label{eq:mtns:CSIP_original}
	\begin{aligned}
		\ov^{\ast}=& \inf_{\dvm \in \dvset} && \inprod{C}{\dvm} \\
		& \sbjto && \begin{cases}
			 \inprod{\uvar}{\dvm} \leqslant b \quad \text{for all}\, \uvar\in \uset,\\
            \dvm \succeq 0,
            \end{cases}
	\end{aligned}
\end{equation}
where the \emph{domain} \(\dvset \Let \symmat{n} \subset \matr{n}\) is a closed and convex set with non-empty interior, the matrices \(C,\uvar \in \symmat{n}\) and \(b \in \Rbb\), and the \emph{constraint index set} $\uset \subset \symmat{n}$ is compact with nonempty interior. The \emph{admissible set} \(\feas\) is defined by 
\[\feas \Let \aset[\big]{\dvm \in \dvset \suchthat \dvm \succeq 0,\, \inprod{\uvar}{X} \leq b \,\, \text{for all }\uvar \in \uset},\] and it is assumed to have a non-empty interior, i.e., there exists a symmetric matrix \(\ol{\dvm} \in \dvset\) such that \(\dvm \succeq 0\) and \(\langle\uvar,\ol{\dvm}\rangle<b\) for every \(\uvar{}{} \in \uset\).\footnote{The assumption implies that the problem \eqref{eq:mtns:CSIP_original} is strictly feasible for every \(\uvar \in \uset\).  Consequently, there exists a symmetric matrix \(\ol{\dvm} \in \dvset\) such that \(\ol{\dvm} \succeq 0\) and for every (for a fixed \(n \in \N\)) \(n\mbox{-}\)tuple \(\bigl(\uvar_1,\uvar_2,\ldots,\uvar_n \bigr) \in \uset^n\), \(\inprod{\uvar}{\ol{\dvm}} < b\) for every \(i=1,2,\ldots,n\).}

The requirement that the decision matrix \(\dvm \in \psdmat{n}\) is equivalent to the condition that \(
\uvarr{\transpose}\dvm \uvarr \ge 0 \, \text{for all }\uvarr \in \Rbb^n, 
\) consequently, the optimization problem \eqref{eq:mtns:CSIP_original} can be written as
\begin{equation}
	\label{eq:CSIP_aux}
	\begin{aligned}
		\ov^{\ast}=& \inf_{\dvm \in \dvset} && \inprod{C}{\dvm} \\
		& \sbjto && \begin{cases}
			 \inprod{\uvar}{\dvm} \leqslant b \quad \text{for all}\, \uvar\in \uset,\\
            \uvarr\transpose\dvm \uvarr \geq 0 \quad \text{for all }\uvarr \in \Rbb^n.
            \end{cases}
	\end{aligned}
\end{equation}
We assume that \(y{\as}\) can take the value $-\infty$. 
The family $\aset[\big]{\inprod{\uvar}{\dvm} \leq b,\, \uvarr\transpose\dvm \uvarr \geq 0 \suchthat  \, \uvar \in \uset, \uvarr \in \Rbb^n}$ of constraints is often known as \emph{semi-infinite constraints} since it may contain uncountably many inequality constraints. Consequently, the optimization problem \eqref{eq:mtns:CSIP_original} consists of a \emph{finite} set of decision variables and \emph{infinitely} many of constraints each parameterized by \(\sip \Let (\uvar,\uvarr) \in \uset \times \Rbb^n\). We write the constraint \(v^{\top}\dvm v \ge 0\) for all \(v\in \Rbb^n\) as:
\begin{equation}
    \uvarr\transpose\dvm \uvarr \geq 0 \quad\text{for all }v \in K \subset \Rbb^n,
\end{equation}
where \(K \Let \Ball_{2}^n[0,1]\) is a Euclidean unit closed ball in \(\Rbb^n\). These types of reduction techniques are well studied in \emph{trust-region methods} (see \cite{ref:ARC-NIMG-PLT-00} for a detailed exposition) and commonly employed in various algorithms for tractability. The reformulated optimization problem, for which we will establish a tractable algorithm to obtain its near-optimal solution:
\begin{equation}
	\label{eq:CSIP_mod1}
	\begin{aligned}
		\ol{\ov}^{\ast}=& \inf_{\dvm \in \dvset} && \inprod{C}{\dvm} \\
		& \sbjto && \begin{cases}
			 \inprod{\uvar}{\dvm} \leqslant b \quad \text{for all}\, \uvar\in \uset,\\
            \uvarr\transpose\dvm \uvarr \geq 0 \quad \text{for all }\uvarr \in K \subset \Rbb^n;
            \end{cases}
	\end{aligned}
\end{equation}
naturally, \(\ol{\ov}^{\ast} = y^{\ast}\). 
We write \(C = (C_1\; C_2 \; \cdots \; C_n)\), \(\uvar = (\uvar_1\; \uvar_2 \; \cdots \; \uvar_n)\), and \(\dvm = (\dvm_1\; \dvm_2 \; \cdots \; \dvm_n)\) where \(C_i,\,\uvar_i,\dvm_i \in \Rbb^n\) are the columns of these matrices. By vectorizing our formulation and extracting the columns of \(A= (A_1,\ldots,A_n) \in \mathcal{A}\), the optimization problem \eqref{eq:mtns:CSIP_original} can be further simplified as `\(\sisdp\)' of the following form:
\begin{equation}
\label{eq:main SD-SIP}
    \begin{aligned}
    \ol{\ov}^{\ast} = &\inf_{(\dvm_i)_{i=1}^n } && \sum_{i=1}^n C_i{\transpose} \dvm_i \\
		& \sbjto && \begin{cases}
			 \sum_{i=1}^n \uvar_i{\transpose} \dvm_i \leqslant b \\ 
             \text{for all }A = (A_1,\ldots,A_n)\in \mathcal{A} \text{ where }i=1,\ldots,n,\\
            \uvarr{\transpose}\dvm \uvarr \geq 0 \quad \text{for all }\uvarr \in K \subset \Rbb^n.
            \end{cases}
    \end{aligned}
\end{equation}
\begin{remark}
    \label{rem:on problem SD-SIP}
Notice that the optimization problem~\eqref{eq:main SD-SIP} is a convex semi-infinite program with the constraint index set given by \(\mathcal{A} \times K\), which in its current form, is an NP-hard problem. To establish a computationally tractable approach to solve \eqref{eq:main SD-SIP} we take the route given in \cite{ref:DasAraCheCha-22}. To this end, we first define and establish certain structural properties of the function \(\gfunc\) in subsequent sections.
\end{remark}
\subsection{Preliminary results}
The chief contribution in \cite{ref:DasAraCheCha-22} was the translation of a semi-infinite program to a relaxed convex program with finite constraints where the constraints of the latter are selected in an intelligent manner and a global optimization is solved as an intermediary step. We adopt this technique to establish a method to directly tackle \(\sisdp\): Let us define the augmented \emph{semi-infinite variable} by \(\sParam \Let \bigl((\uvar_1 \; \uvar_2 \; \ldots \; \uvar_n), \uvarr\bigr) = \bigl(\uvar,\uvarr \bigr)\) taking values in \(\mathcal{A} \times K\). Let us further denote by 
\[
    \ol{N} \Let \frac{n(n+1)}{2},
\]
the dimension of the decision space.
Then \(\sParam_1, \sParam_2,\ldots,\sParam_{\ol{N}}\) corresponds to an \( \ol{N}\mbox{-}\)tuple in 
\[\totconset \Let (\mathcal{A} \times K)^{\ol{N}}\] 
where \(\sParam_j =  \bigl((\uvar^j_1 \; \uvar^j_2 \; \ldots \; \uvar^j_n), \uvarr_j\bigr) = \bigl(\uvar^j,\uvarr_j \bigr)\) for each \(j=1,2,\ldots, \ol{N}\).  Define \(\ol{\sParam} \Let (d_1,d_2, \dots,\sParam_{\ol{N}})\), and following \cite[\S2]{ref:DasAraCheCha-22}, we define the relaxed feasibility set by
\begin{equation}\label{eq:si-sdp feas}
     \rfeas(\ol{\sParam})  \Let  \left\{(\dvm_i)_{i=1}^n \;\middle\vert\;  
    \begin{array}{@{}l@{}}
        \sum_{i=1}^n \bigl(\uvar_{i}^j\bigr){\transpose}\dvm_{i}^j \leq b,\, \uvarr_j{\transpose}\dvm \uvarr_j \geq 0 \\
        \text{ for }\sParam_1,\ldots,\sParam_{\ol{N}} \in \totconset
        \text{ where } \\ \sParam_j= \bigl((\uvar^j_1 \; \uvar^j_2 \; \ldots \; \uvar^j_n), \uvarr_j\bigr)
        \end{array}
        \right\}.
\end{equation}
We define the function \(\gfunc:\totconset \lra \Rbb\) by 
\begin{equation}
    \label{eq:minimization problem}
	   \ol{\sParam} \mapsto  \gfunc(\ol{\sParam}) \Let   \inf \aset[\bigg]{\sum_{i=1}^n C_i{\transpose} \dvm_i \in \R \suchthat (\dvm_i)_{i=1}^n \in \rfeas(\ol{\sParam})}. 
\end{equation}
The following technical assumption aids in proving the main result ---Theorem \ref{thrm:mtns:value_func_equality} --- of this section.
\begin{assumption}
\label{assum:slater's}
We stipulate for the problem \eqref{eq:main SD-SIP} that the following Slater-type condition holds: there exists \(\dvar\mbox{-}\)tuple \((d_1,d_2,\ldots,d_{\dvar})\) such that the feasible set \eqref{eq:si-sdp feas} is nonempty. 
\end{assumption}
\begin{proposition}
    \label{prop:regularity of G}
    Consider the optimization problem \eqref{eq:mtns:CSIP_original} along with its associated data and let Assumption \ref{assum:slater's} hold. Then \(
    \gfunc(\cdot)
    \) admits the following properties:
    \begin{enumerate}[label=\textup{(\ref{prop:regularity of G}-\alph*)}, leftmargin=*, widest=b, align=left]
        \item \label{it:convexity} If \(\uset\) is a convex, then the function \(\gfunc:\totconset \lra \Rbb\) defined in \eqref{eq:minimization problem} is convex;

        \item \label{it:usc} The function \(\gfunc:\totconset \lra \Rbb\) is upper semicontinuous.
    \end{enumerate}
\end{proposition}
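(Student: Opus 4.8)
The plan is to regard $\gfunc(\ol{\sParam})$, defined in \eqref{eq:minimization problem}, as the optimal value of the finite linear program obtained from \eqref{eq:main SD-SIP} by retaining only the constraints carried by the $\ol{N}$-tuple $\ol{\sParam}$, so that $\ol{\sParam}$ enters this program purely through its constraint data, and then to deduce the two properties from standard parametric-optimization principles: for \ref{it:convexity}, convexity of the domain together with a partial-minimization argument; for \ref{it:usc}, inner semicontinuity of the relaxed feasibility multifunction $\rfeas(\cdot)$, which the standing admissibility hypothesis on $\feas$ together with Assumption~\ref{assum:slater's} supply. I expect \ref{it:convexity} to be the harder of the two.

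For \ref{it:convexity}, first note that when $\uset$ is convex the domain $\totconset = (\ouset^n \times K)^{\ol{N}}$ is convex: each $\ouset_i = \aset{A e_i \suchthat A \in \uset}$ is a linear image of $\uset$, hence convex, $K = \Ball_2^n[0,1]$ is convex, and a finite product of convex sets is convex. Next write $\gfunc(\ol{\sParam}) = \inf_{(\dvm_i)_{i=1}^n} \Phi\bigl((\dvm_i)_{i=1}^n, \ol{\sParam}\bigr)$, where $\Phi\bigl((\dvm_i)_{i=1}^n, \ol{\sParam}\bigr) \Let \sum_{i=1}^n C_i{\transpose}\dvm_i$ if $(\dvm_i)_{i=1}^n \in \rfeas(\ol{\sParam})$ and $+\infty$ otherwise, so that $\gfunc$ is obtained from $\Phi$ by minimizing out the decision block $(\dvm_i)_{i=1}^n$ (Assumption~\ref{assum:slater's} guarantees $\Phi$ is proper). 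Since partial minimization of a jointly convex function is convex and the objective is affine, it suffices to prove that the graph $\Gamma \Let \aset{(\ol{\sParam}, (\dvm_i)_{i=1}^n) \suchthat (\dvm_i)_{i=1}^n \in \rfeas(\ol{\sParam})}$ is convex; for each fixed $(\dvm_i)_{i=1}^n$ the defining inequalities $\sum_{i=1}^n (\uvar_i^j){\transpose}\dvm_i \le b$ are affine in $\ol{\sParam}$, which is the structural leverage that ``$\uset$ convex'' is meant to provide, and this is precisely the convexity step carried out in \cite[\S2]{ref:DasAraCheCha-22}, which I would invoke after checking that its hypotheses apply to the pair $(\Phi, \totconset)$. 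The main obstacle sits here: the bilinear expressions $\inprod{\uvar^j}{\dvm}$ and $\uvarr_j{\transpose}\dvm\,\uvarr_j$ couple $\ol{\sParam}$ and $(\dvm_i)_{i=1}^n$ jointly, so convexity of $\Gamma$ is not automatic and has to be extracted carefully from the specific form of $\rfeas$.

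For \ref{it:usc}, I would upgrade the admissibility hypothesis on \eqref{eq:mtns:CSIP_original} to a uniform strict Slater point --- a matrix $\ol{\dvm} \in \dvset$ with $\ol{\dvm} \succ 0$ and $\inprod{\uvar}{\ol{\dvm}} < b$ for every $\uvar \in \uset$: an interior point of $\feas$ is automatically positive definite and, pushing outward along each direction $\uvar$, satisfies the linear constraints strictly, while compactness of $\uset$ makes the slack uniform. Given $\ol{\sParam}^{(k)} \to \ol{\sParam}$ in $\totconset$ and $\epsilon > 0$, pick $(\dvm_i)_{i=1}^n \in \rfeas(\ol{\sParam})$ with $\sum_{i=1}^n C_i{\transpose}\dvm_i \le \gfunc(\ol{\sParam}) + \epsilon$, set $\dvm_t \Let (1-t)\dvm + t\,\ol{\dvm}$ for $t \in (0,1]$, and check that $\dvm_t$ satisfies every constraint of $\rfeas(\ol{\sParam})$ strictly: the linear ones since $\inprod{\uvar^j}{\dvm_t} \le (1-t)b + t\inprod{\uvar^j}{\ol{\dvm}} < b$, and the quadratic ones since $\uvarr_j{\transpose}\dvm_t\,\uvarr_j \ge t\,\uvarr_j{\transpose}\ol{\dvm}\,\uvarr_j \ge 0$. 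As strict feasibility persists under small perturbations of the constraint data, $\dvm_t \in \rfeas(\ol{\sParam}^{(k)})$ for all large $k$, whence $\limsup_k \gfunc(\ol{\sParam}^{(k)}) \le \sum_{i=1}^n C_i{\transpose}(\dvm_t)_i$; letting $t \downarrow 0$ and then $\epsilon \downarrow 0$ gives $\limsup_k \gfunc(\ol{\sParam}^{(k)}) \le \gfunc(\ol{\sParam})$, i.e.\ upper semicontinuity. This is the familiar principle that lower semicontinuity of $\rfeas(\cdot)$ as a multifunction forces upper semicontinuity of the infimal value; the one point that needs separate care is a selected coordinate with $\uvarr_j = 0$, where the associated quadratic inequality is vacuous at $\ol{\sParam}$ but genuine for nearby $\ol{\sParam}^{(k)}$, and this is where the positivity $\ol{\dvm} \succ 0$ of the Slater point (hence $\dvm_t \succeq 0$ once $t$ is bounded away from $0$) does the work.
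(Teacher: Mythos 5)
Your overall architecture tracks the paper's: for \ref{it:convexity} you ultimately fall back on \cite[Proposition 1]{ref:DasAraCheCha-22}, which is exactly (and only) what the paper's one-line proof does; for \ref{it:usc} you prove directly, via a Slater point and a perturbation $\dvm_t=(1-t)\dvm+t\ol{\dvm}$, the principle that the paper obtains by verifying the constraint qualification of \cite[Definition 5.3]{ref:param_opt_still} and citing \cite[Lemma 5.4-(b)]{ref:param_opt_still}. Your direct route for \ref{it:usc} is more self-contained and more informative than the citation. But each half has a genuine soft spot, and in both cases you have put your finger on exactly the step the paper also leaves unexamined.

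On \ref{it:convexity}: the obstacle you flag is fatal to the route you sketch, not merely delicate. The graph $\Gamma$ is cut out by the bilinear inequalities $\sum_{i}(\uvar_i^j)^{\transpose}\dvm_i\le b$ and the forms $\uvarr_j^{\transpose}\dvm\,\uvarr_j\ge 0$, which are quadratic in the joint variable $(\ol{\sParam},\dvm)$; their solution sets are not convex, so $\Phi$ is not jointly convex and partial minimization cannot deliver the claim. Whatever \cite[Proposition 1]{ref:DasAraCheCha-22} establishes, it is not by this mechanism, so ``checking that its hypotheses apply'' \emph{is} the entire content of part (a) rather than an afterthought; your reconstruction should be dropped and replaced by that verification (the paper itself offers nothing more than the citation, so you are no worse off, but the sketch as written would not compile into a proof).

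On \ref{it:usc}: your argument is correct at every $\ol{\sParam}$ whose components all have $\uvarr_j\ne 0$, and you correctly isolate the problematic case $\uvarr_j=0$. Your proposed repair, however, does not work: a near-optimal $\dvm\in\rfeas(\ol{\sParam})$ need not be positive semidefinite (the relaxed set only imposes finitely many directional inequalities), so $\dvm_t\succeq 0$ fails for $t$ bounded away from $1$ in general, and in any case ``$t$ bounded away from $0$'' contradicts the concluding limit $t\downarrow 0$. The difficulty is real: if $\uvarr_j=0$ the $j$-th quadratic constraint is vacuous at $\ol{\sParam}$, yet every nearby tuple with $\uvarr_j^{(k)}\ne 0$ imposes the genuine constraint $\hat v^{\transpose}X\hat v\ge 0$ in the unit direction $\hat v$ of $\uvarr_j^{(k)}$, whose strength depends only on that direction and not on $\lvert\uvarr_j^{(k)}\rvert$; the feasible set can therefore shrink discontinuously and the value can jump upward, which is precisely a failure of upper semicontinuity unless such degenerate tuples are excluded or handled separately. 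The paper's own verification of the constraint qualification does not close this either --- it exhibits a sequence inside $\rfeas(\ol{\sParam})$ for the \emph{fixed} parameter converging to a fixed feasible point, whereas the qualification requires approximating selections from $\rfeas(\ol{\sParam}^{(k)})$ along \emph{varying} parameters. So you have located a gap that the statement and the paper's proof share; a clean fix is to restrict the quadratic index set to the unit sphere (or otherwise exclude $\uvarr=0$), after which your perturbation argument goes through verbatim.
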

\begin{proof}
With the convexity of \(\uset\), the convexity of \(\gfunc(\cdot)\) follows readily from \cite[Proposition 1]{ref:DasAraCheCha-22}. Observe that in \eqref{eq:si-sdp feas}, the map \((\dvm_1,\ldots,\dvm_n) \mapsto \sum_{i=1}^n (\uvar_i)^{\top}\dvm_i\) is continuous for all \(j=1,\ldots,\ol{N}\) and \(X \mapsto \uvarr_j^{\top}\dvm \uvarr\) is also continuous. Fix \((\ol{\dvm_i})_{i=1}^n \subset \rfeas(\ol{\sParam})\). From Assumption \ref{assum:slater's} and the continuity of the constraint functions \((\dvm_1,\ldots,\dvm_n) \mapsto \sum_{i=1}^n (\uvar_i)^{\top}\dvm_i\) and \(X \mapsto \uvarr_j^{\top}\dvm \uvarr\), there exist a sequence \((\dvm_{i}^m)_{m \in \N}\subset \rfeas(\ol{\sParam})\) such that \(\dvm^m_i \lra \ol{\dvm_i}\) for each \(i=1,2,\ldots,n\) as \(m \lra + \infty\) \cite{ref:GanchaLCSS}. This implies that the constraint qualification condition (CQ) in \cite[Definition 5.3, p. 53]{ref:param_opt_still} holds. We get the upper semicontinuity of \(\gfunc(\cdot)\) invoking \cite[Lemma 5.4-(b), p. 54]{ref:param_opt_still}.  
\end{proof}
Next we state the main result of this section.

\begin{theorem}\label{thrm:mtns:value_func_equality}
    Consider the optimization problem \eqref{eq:main SD-SIP} and suppose that Assumption \eqref{assum:slater's} is in force. Consider also the convex SIP \eqref{eq:minimization problem} along with its associated data and notations. Consider the global maximization problem 
    \begin{equation}
    \label{e:global_max_prob}
    \begin{aligned}
        & \sup_{\ol{d}\in \totconset}
        &&\gfunc(\ol{d}).
    \end{aligned}
    \end{equation}
    Then there exists  \(\ol{d}\as \in \totconset\) that solves \eqref{e:global_max_prob}. Moreover, \(\ol{y}\as=\gfunc(\ol{d}\as)\).
    \end{theorem}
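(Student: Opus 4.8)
The plan is to read \eqref{eq:main SD-SIP} as a \emph{linear} (hence convex) semi-infinite program over the finite-dimensional decision space \(\dvset=\symmat{n}\cong\Rbb^{\ol{N}}\), with \emph{compact} constraint index set \(\ouset^n\times K\) and data that depend continuously on the index \(\sParam=(\uvar,\uvarr)\) --- both families \(\dvm\mapsto\inprod{\uvar}{\dvm}-b\) and \(\dvm\mapsto-\uvarr{\transpose}\dvm\uvarr\) are affine in \(\dvm\) --- and then to reproduce the argument of \cite{ref:DasAraCheCha-22} after verifying its hypotheses: convexity of the constraints in \(\dvm\) is automatic; \(\ouset^n\times K\) is compact because each \(\ouset_i\) is a linear, hence continuous, image of the compact set \(\uset\) and \(K=\Ball_{2}^n[0,1]\) is compact; and a strictly feasible (Slater-type) point is furnished by the standing interior assumption on \(\feas\) together with Assumption~\ref{assum:slater's}. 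I would then organize the argument in three steps.

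\noindent\emph{Existence of a maximizer, and the easy inequality.} The set \(\totconset=(\ouset^n\times K)^{\ol{N}}\) is compact; \(\gfunc\) is upper semicontinuous on it by the upper-semicontinuity assertion of Proposition~\ref{prop:regularity of G}; and \(\gfunc<+\infty\) throughout, because \(\feas\neq\emptyset\) and, since the relaxed set \eqref{eq:si-sdp feas} retains only finitely many of the constraints defining \(\feas\), one has \(\feas\subseteq\rfeas(\ol{\sParam})\) for every \(\ol{\sParam}\). A (possibly \(-\infty\)-valued) upper semicontinuous function on a nonempty compact set attains its supremum, so \eqref{e:global_max_prob} has a maximizer \(\ol{\sParam}^{\ast}\in\totconset\). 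The same inclusion \(\feas\subseteq\rfeas(\ol{\sParam})\) gives \(\gfunc(\ol{\sParam})=\inf_{\dvm\in\rfeas(\ol{\sParam})}\inprod{C}{\dvm}\le\inf_{\dvm\in\feas}\inprod{C}{\dvm}=\ol{\ov}^{\ast}\) for every \(\ol{\sParam}\), whence \(\sup_{\ol{\sParam}\in\totconset}\gfunc(\ol{\sParam})\le\ol{\ov}^{\ast}\).

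\noindent\emph{The hard inequality \(\sup_{\ol{\sParam}\in\totconset}\gfunc(\ol{\sParam})\ge\ol{\ov}^{\ast}\).} Since a Slater-type point exists and the index set is compact with continuous data, linear semi-infinite duality yields zero duality gap for \eqref{eq:main SD-SIP} and attainment of its Haar dual by a nonnegative, finitely supported multiplier \(\lambda\). Restricting \(\lambda\) to its support and passing to a basic optimal solution of the resulting finite dual linear program --- whose equality block \(\sum_{\ell}\lambda_{\ell}(\text{constraint normal})_{\ell}=-C\) lies in \(\symmat{n}\cong\Rbb^{\ol{N}}\) --- lets us assume the support consists of at most \(\ol{N}\) points, each of ``linear'' type \(\uvar\in\ouset^n\) or of ``semidefinite'' type \(\uvarr\in K\). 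Packing these at most \(\ol{N}\) points into \(\ol{N}\) pairs \(\sParam_j=(\uvar^j,\uvarr_j)\) produces \(\ol{\sParam}\in\totconset\) for which the same \(\lambda\) is dual feasible, with unchanged value \(\ol{\ov}^{\ast}\), for the program defining \(\gfunc(\ol{\sParam})\); weak duality then gives \(\gfunc(\ol{\sParam})\ge\ol{\ov}^{\ast}\), so \(\sup_{\ol{\sParam}\in\totconset}\gfunc(\ol{\sParam})\ge\ol{\ov}^{\ast}\). (An equivalent contradiction argument: if \(\sup_{\ol{\sParam}\in\totconset}\gfunc(\ol{\sParam})<\ol{\ov}^{\ast}\), choose \(\gamma\) strictly between the two; then every \(\ol{N}\)-tuple of constraints together with the cut \(\inprod{C}{\dvm}\le\gamma\) has a common point, hence so does every \(\ol{N}+1\) of the closed halfspaces \(\{\inprod{\uvar}{\dvm}\le b\}\), \(\{\uvarr{\transpose}\dvm\uvarr\ge0\}\), \(\{\inprod{C}{\dvm}\le\gamma\}\); Helly's theorem in \(\Rbb^{\ol{N}}\) --- applicable because compactness of the index set collapses \(\feas\) to a single continuous convex inequality and the Slater-type point excludes pathological recession directions --- then forces \(\feas\cap\{\inprod{C}{\dvm}\le\gamma\}\neq\emptyset\), contradicting \(\gamma<\ol{\ov}^{\ast}\).) Combining the two inequalities, \(\sup_{\ol{\sParam}\in\totconset}\gfunc(\ol{\sParam})=\ol{\ov}^{\ast}\), and since this supremum is attained at \(\ol{\sParam}^{\ast}\) we conclude \(\gfunc(\ol{\sParam}^{\ast})=\ol{\ov}^{\ast}\), which is the assertion.

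\noindent\emph{Where the difficulty lies.} Step three is the crux. A direct appeal to Helly's theorem over the \emph{infinite} family of non-compact halfspaces is invalid --- for instance, nested rays enjoy the finite-intersection property yet have empty intersection --- and it is rescued only by the combination of (i) compactness of the index set plus continuity of the data, which collapses the infinitely many constraints into a single continuous convex inequality, and (ii) the Slater-type condition, through which linear semi-infinite duality produces a finitely supported dual optimum whose support cardinality is controlled by \(\ol{N}=\dim\symmat{n}\) (equivalently, by passing to a basic solution of a linear system with \(\ol{N}\) equalities). The facts that \(\ol{\ov}^{\ast}\) may equal \(-\infty\), and that even when finite the infimum need not be attained over the unbounded domain \(\symmat{n}\), are exactly why the argument must be run on the dual side rather than through first-order optimality at a primal minimizer. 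Once the hypotheses of \cite{ref:DasAraCheCha-22} are checked, the remaining steps are routine.
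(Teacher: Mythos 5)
Your proof is correct in substance, and its first step coincides exactly with the paper's: existence of a maximizer \(\ol{\sParam}^{\ast}\) via upper semicontinuity of \(\gfunc\) (Proposition~\ref{it:usc}), compactness of \(\totconset\), and Weierstrass. Where you diverge is in the value equality \(\ol{\ov}^{\ast}=\gfunc(\ol{\sParam}^{\ast})\): the paper disposes of this in one line by citing \cite[Theorem 1]{ref:DasAraCheCha-22} (after noting that \(\rfeas(\ol{\sParam})\) is nonempty, closed, and convex), whereas you reprove the key reduction from scratch via linear semi-infinite duality --- Slater point plus compact index set with continuous data gives zero duality gap and a finitely supported dual optimum, a basic solution of the dual equality system in \(\symmat{n}\cong\Rbb^{\ol{N}}\) trims the support to at most \(\ol{N}\) indices, and packing these into an \(\ol{N}\)-tuple \(\ol{\sParam}\in\totconset\) together with weak duality and the trivial inclusion \(\feas\subseteq\rfeas(\ol{\sParam})\) sandwiches \(\gfunc(\ol{\sParam})=\ol{\ov}^{\ast}\). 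What your route buys is a self-contained verification, in the \(\sisdp\) setting, of precisely the hypotheses (compactness of \(\ouset^n\times K\) as a continuous image of \(\uset\times\Ball_2^n[0,1]\), affineness of both constraint families in \(\dvm\), strict feasibility) under which the cited theorem applies --- a check the paper's terse citation leaves implicit; what the paper's route buys is brevity. Two small points to tighten: the case \(\ol{\ov}^{\ast}=-\infty\) must be dispatched separately before invoking duality (it is immediate from your easy inequality \(\gfunc\le\ol{\ov}^{\ast}\), which forces \(\gfunc\equiv-\infty\)); and the parenthetical Helly argument should be demoted or dropped, since, as you yourself observe, Helly's theorem for an infinite family of non-compact halfspaces requires exactly the compactification you only sketch --- the duality argument is the one that actually closes the gap.
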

\begin{proof}
The existence of an optimizer \(\ol{\sParam}^{\ast}\) follows immediately from the upper semicontinuity of \(\gfunc(\cdot)\) in the assertion Proposition \ref{it:usc}, the compactness of \(\totconset\), and from the Weierstrass Theorem \cite[Theorem 2.2]{ref:OG10}. Hence the first assertion stands established. We observe that the set \(\rfeas(\ol{\sParam})\) is a nonempty (follows from Assumption \ref{assum:slater's}) closed and convex set. Invoking Proposition \cite[Theorem 1]{ref:DasAraCheCha-22}, we assert \(\ol{y}\as=\gfunc(\ol{d}\as)\).  
\end{proof}

\subsection{Algorithm to solve \(\sisdp\)}
Here we provide Algorithm \ref{alg:mtns:sisdp} to solve \eqref{eq:main SD-SIP}. Let us fix some notations that are used in the algorithm: At \(k^{\text{th}}\) iteration, \(\ol{\sParam}^k \in \mathcal{A} \times K \) denotes the optimal solution  of the global maximization problem \eqref{eq:main SD-SIP}. Here \(\ol{\sParam}^k \Let \big( \sParam_1^k,\ldots,\sParam_{\ol{N}}^k\bigr)\) and \(\sParam_j^k = \bigl((\uvar^{j,k}_1 \; \uvar^{j,k}_2 \; \ldots \; \uvar^{j,k}_n), \uvarr_j\bigr)\) for each \(j=1,\ldots,\ol{N}\), and we define by \(s^k = \bigl(\dvm^k_i \bigr)_{i=1}^n\) the solution to \eqref{eq:minimization problem}.
\begin{algorithm}
	\SetAlgoLined
  	\DontPrintSemicolon
	\SetKwInOut{ini}{Initialize}
    \SetKwInOut{giv}{Data}
    \giv{Stopping criterion $\SC(\cdot)$, threshold for the stopping criterion \(\tau\);}
    \ini{initialize \(\ol{\sParam^0}\) and \(s^0\), initial guess for maximum value \(\gfunc_{\max}\), initial guess for initial solution \(\ol{s}\);}
	\BlankLine
	\While{$\SC(k) \leqslant \tau$}{
		\emph{Sample}: \(\ol{\sParam}^k \in \mathcal{A} \times K \) \;
		\emph{Evaluate} \(\gfunc^k \Let \gfunc(\ol{\sParam}^k)\) as defined in \eqref{eq:minimization problem} \;
		\emph{Recover} the solution  \(s^k \) by solving the minimization problem \eqref{eq:minimization problem}
        \;
		\uIf{$\gfunc^k \geqslant \gfunc_{\max}$}{Set $\gfunc_{\max} \gets \gfunc^k$\;
        Set ${\ol{s}} \leftarrow s^k$}
        Update $k \gets k+1$ \;
	}
\caption{Algorithm to solve \eqref{eq:main SD-SIP}}\label{alg:mtns:sisdp}
\end{algorithm}

\section{\(\sisdp\text{-}\)based algorithm to solve distributionally robust MPC}\label{sec:main_results}     
This section develops a framework to solve distributionally robust model predictive control (DRMPC) problems based on the \(\sisdp\) algorithm developed in \S\ref{sec:prob_form} with applications of mathematical finance in mind. We first set up a DRMPC problem and subsequently we will adapt Algorithm \ref{alg:mtns:sabaro} to find its solution online. 

\subsection{DRMPC formulation}
Let \(d, m, q \in \N\) and consider a time-invariant discrete-time stochastic control system given by the recursion
\begin{equation}
    \label{eq:mtns:system}
    \st_{t+1} = Ax_t+Bu_t+\sum_{j=1}^q \bigl(C_j\st_t+D_ju_t \bigr)\dist_t^j,\,\, \st_0=\xz,\,\, t \in \N,
\end{equation}
where \(\st_{t} \in \Rbb^{d}\) and \(\ut_t \in \Rbb^m\) are the vectors representing the states, control inputs, and \(\dist_t^j \in \Rbb\) for \(j=1,\ldots,q\) are i.i.d. (independent and identically distributed)  random uncertainty variables at time \(t\) that multiply the state and the control vectors. The matrices \(A \in \Rbb^{d \times d}\) and \(B \in \Rbb^{d \times m}\) are respectively, the system and the control matrices, and \(C_j \in \Rbb^{d \times d}\), \(D_j \in \Rbb^{d \times m}\) for \(j=1,\ldots,q\). We assume that a perfect measurement of the state \(x_t\) is available. The vector \(\dist_t^j\) is a \(\Rbb\)-valued random \emph{process noise} with possibly unbounded support such that \(\EE[\dist_t^j]=0\) and \(\EE[\dist_t^j \dist_t^l]= 0\) for \(j \neq l\). 
Compactly, we write \(\dist_t = (\dist_t^1 \, \dist_t^2 \,\cdots\, \dist_t^q)\) as a \(q\mbox{-}\)dimensional vector.

We assume that the following objects are given:
\vspace{1mm}
\begin{enumerate}[label=\textup{(\ref{eq:mtns:system}-\alph*)}, leftmargin=*, widest=b, align=left]
\item \label{eq:mtns:system:costs} A time horizon \(\horizon \in \N\); a quadratic in state-action \emph{cost-per-stage} function
\((\dummyx,\dummyu) \mapsto \cost(\dummyx,\dummyu) \Let \inprod{\dummyx}{Q\dummyx}+ \inprod{\dummyu}{R\dummyu}\) and a quadratic \emph{final-stage cost} function \(\dummyx \mapsto \fcost(\dummyx) \Let \inprod{\dummyx}{P \dummyx}\) with \(Q \in \psdmat{d}\), \(P \in \psdmat{d}\) and \(R \in\pdmat{m}\).

\item Design parameters \(\beta_c>0\) and \(\beta_c^X\) such that for the state-action concatenated vector \(h_t \Let (x_t \,\,u_t) \in \Rbb^{d+m}\); we impose the following set of constraints on the state and control variables: let \(H_l \succeq 0\), \(H^X_l \succeq 0\), \(f_l \in \Rbb^{d+m}\), \(f^{\st}_l \in \Rbb^d\), and \(t=0,\ldots,\horizon-1\)
\begin{equation}\label{eq:chance_constraints}
\begin{aligned}
\mathcal{C}_{\dummyx,\dummyu}(\xz) \Let\begin{cases}
  \EE_{\xz} \bigl[ h_t^{\top}H_l h_t + f_l^{\top} h_t \bigr] \le \beta_c \text{ with }l=1,\ldots,L_1, \\
\EE_{\xz} \bigl[ x_t^{\top}H^{\st}_l x_t + (f^{\st}_l)^{\top} \st_t \bigr] \le \beta_c^{\st} \text{ with }l=1,\ldots,L_2.
\end{cases}
\end{aligned}
\end{equation}
\end{enumerate}
\vspace{2mm}
To control and subsequently stabilize the dynamics \eqref{eq:mtns:system} by applying a closed-loop control law we solve a constrained stochastic optimization problem with an expected cost in a receding horizon manner. In general, numerical tractability is always an issue for such problems where the minimization is performed over control policies instead of control sequences, which is common in conventional deterministic formulations. Thus, for tractability we consider control laws of the following form \cite{ref:JAP:CS:SMPC:TAC:2010}: let \(\mean_{\st_t} \Let \EE_{\xz}\expecof[]{\st_t}\) and define
\begin{align}\label{eq:st_fbpar}
    u_t = \uz_t+K_t(x_t - \mean_{\st_t}).
\end{align}
The input and the gain sequence \(\uz_t\) and \(K_t\) respectively, in \eqref{eq:st_fbpar} are chosen via solving an appropriate optimization problem. Define 
\begin{align}\label{eq:tot_cost}
\hspace{-2mm}\cst_{\horizon}\bigl(\xz,u_t,\dist_t\bigr) \Let \inprod{x_N}{S \dummyx_N}+\sum_{t=0}^{\horizon-1} \inprod{\st_t}{Q\st_t}+ \inprod{u_t}{R u_t}.
\end{align}
Define \(\upsilon_t \Let \aset[\big]{(\bar{u}_t,K_t)_{t=0}^{\horizon-1}}\). Given the above ingredients, the baseline stochastic optimal control problem is given by:
\begin{equation}
\label{eq:baseline_SOCP}
\begin{aligned}
		& \hspace{-2mm}\min_{\upsilon_t} && \hspace{-2mm}\EE_{\xz} \expecof[\big]{\cst_{\horizon}\bigl(\xz,\cont_t,\dist_t\bigr)}\\
		& \hspace{-2mm}\sbjto && \hspace{-4mm}\begin{cases}
			 \text{dynamics }\eqref{eq:mtns:system}\text{ for each }t = 0,\ldots,\horizon-1,\\\st_0 = \xz,\,\text{the constraints } \eqref{eq:chance_constraints},\,\EE_{\xz}\expecof[\big]{\st_{\horizon}^{\top}\admfinst \st_{\horizon}} \le \alpha,\\ \text{the control parameterization } \eqref{eq:st_fbpar} \text{ with }u_0 = \ol{u}_0.
		\end{cases}
	\end{aligned}
\end{equation}
The above stochastic optimal control is quite well known from the finance viewpoint and this setting has been adopted in several applications related to finance such as portfolio optimization \cite{ref:JAP:ACC:portfolio}, constrained index tracking \cite{ref:JAP:SCH:index_tracking}, trading applications \cite{ref:SM:LJH:JAP:finance} etc. 

\par Let \(\pset\) be the set of probability measures defined on \((\Rbb^q,\Borelsigalg(\Rbb^q))\) where \(\Borelsigalg(\Rbb^q)\) is the Borel sigma-algebra defined on \(\Rbb^q\) in standard fashion. Let \(\hat{\var}_{\dist} \in \pdmat{d}\) and \(\mean_{\dist}\) denote the empirical variance and empirical mean of the process noise \(\dist\) and are mathematically written as
\begin{align*}
    \hat{\var}_{\dist} = \frac{1}{\nsamp}\sum_{i=1}^{\nsamp} \xi_i \xi_i^{\top},
\end{align*}
with \(\xi_i \sim \PP_{\dist}\) are \(d\mbox{-}\)dimensional i.i.d. samples. Define the ambiguity set by 
\begin{equation}\label{eq:ambg_set}
     \amgset_{\nsamp}  \Let  \left\{\PP_{\dist} \in \pset \;\middle\vert\;  
    \begin{array}{@{}l@{}}
        \EE\cexpecof[\big]{\dist_t} = 0,\,
        \EE \expecof[\big]{\dist_t \dist_t^{\top}} = \var_{\dist} \preceq \pa \hat{\var}_{\dist}
        \end{array}
        \right\}.
\end{equation}
Note that the covariance matrix \(\var_{\dist}\) is diagonal and the elements correspond to the variances of the coordinates of \(\dist_t\), i.e., \(\EE \expecof[\big]{\dist_t^j \dist_t^j} = \va_{\dist,j}\) for \(j = 1,2,\ldots, q\). Consequently, the empirical covariance is written as \[\Rbb^{q \times q} \ni \hat{\var}_{\dist} = \diag (\hat{\va}_{\dist,1} \; \hat{\va}_{\dist,2} \; \cdots \; \hat{\va}_{\dist,q}),\] 
and the distributionally robust version of the stochastic optimal control problem \eqref{eq:baseline_SOCP} is given by: 
\begin{equation}
	\label{eq:DRMPC}
	\begin{aligned}
		& \min_{\upsilon_t} \max_{\PP \in \amgset_{\nsamp}} &&  \EE_{\xz} \expecof[\big]{\cst_{\horizon}\bigl(\xz,u_t,\dist_t\bigr)}\\
		& \sbjto && 
			\text{constraints as specified in }\eqref{eq:baseline_SOCP}.
	\end{aligned}
\end{equation}
\subsection{\(\sisdp\) formulation of \eqref{eq:DRMPC}}
DRO problems can be directly solved via employing techniques from semi-infinite optimization \cite{FL:SM:DRO_review19,ref:CheAliBanHota}. With this motivation, we reformulate the optimization problem \eqref{eq:DRMPC} as a \(\sisdp\) of the form \eqref{eq:main SD-SIP}. We take the same route as in \cite[\S III]{ref:JAP:CS:SMPC:TAC:2010} but instead we add a distributional uncertainty in our formulation and allow it to take values from an uncountable set, which makes the SDP a semi-infinite program. Define the quantities \(\acl_t \Let A+ BK_t\), \(\cbar_{j,t} \Let C_j \var^{\st}_t + D_j K_t \var^{\st}_t\), and \(\dbar_{j,t} \Let C_j \mean_{\st_t} + D_j \uz_t\). The mean \(\mean_t\) and the covariance \(\var^{\st}_t\) of \eqref{eq:mtns:system} are computed as 
\begin{align}\label{eq:mean_dyn}
    \mean_{\st_{t+1}} = A \mean_{\st_t} + B \uz_t ,
\end{align}
and 
\begin{align}
    \label{eq:st_cov}
    \var^{\st}_{t+1} \Let \EE_{\xz} \expecof[]{\st_{t+1}}  = \acl_t\var^{\st}_t \acl_t^{\top} &+ \sum_{i=1}^{q} \va_{\dist,j}\cbar_{j,t}\bigl(\var^{\st}_t\bigr){\inverse} \cbar_{j,t}^{\top} + \sum_{i=1}^{q} \va_{\dist,j}\dbar_{j,t} \dbar_{j,t}^{\top}. 
\end{align}
Define \(U_t \Let K_t \var^{\st}_t\). We stipulate that \(\var^{\st}_t \in \pdmat{d}\) for \(t=1,\ldots,N,\) and using the Schur complement lemma \cite{ref:Horn:Johnson:MA}, the covariance \eqref{eq:st_cov} is equivalent to
\begin{align}
    \label{eq:st_cov_m1}
    \begin{pmatrix}
        \var^{\st}_{t+1} & * & * & * \vspace{2mm}\\
        \acl_t^{\top} & \var^{\st}_t & 0 & 0 \\
        \sum_{j=1}^q \va_{w,j}^{1/2} \cbar_{j,t} & 0 & \var^{\st}_t & 0\\
        \sum_{j=1}^q \va_{w,j}^{1/2} \dbar_{j,t} & 0 & 0 & \var^{\st}_t
    \end{pmatrix} \succ 0 \;  
\end{align}
for \(t=1,\ldots,\horizon-1\), and the initial condition,
\begin{align}
     \label{eq:st_cov_m2}
     \begin{pmatrix}
        \var^{\st}_{1} & * \vspace{2mm}\\
         \va_{w,j}^{1/2} \dbar_{j,0} & \identity_d 
    \end{pmatrix} \succ 0 \; \text{for }t=0. 
\end{align}
The rest of the constraints can be equivalently transformed into linear matrix inequalities: Define the matrix variable
\begin{align*}
    P_t \Let \begin{pmatrix}
        P^{\st}_t & P^{\st \cont}_t\\
        \bigl(P^{\st \cont}_t\bigr)^{\top} & P^{\cont}_t
    \end{pmatrix} \; \text{for }t= 0,\ldots,N-1.
\end{align*}
Then for \(t = 0,\ldots,N-1,\) 
\begin{align}
    \label{eq:const_1}
   \hspace{-2mm} \begin{pmatrix}
        P_t & \begin{pmatrix}
            \var^{\st}_t & U_t^{\top}\\
            \mean_{\st_t}^{\top} & \uz_t^{\top}
        \end{pmatrix}^{\top}\\
        \begin{pmatrix}
            \var^{\st}_t & U_t^{\top}\\
            \mean_{\st_t}^{\top} & \uz_t^{\top}
        \end{pmatrix} & \begin{pmatrix}
            \var^{\st}_t & 0\\
            0 & 1
            \end{pmatrix}
    \end{pmatrix} \succeq 0\,\text{with }  \begin{pmatrix}
        P^{\st}_N - \var^{\st}_N & \mean_{\st_N}\\
        \mean_{\st_N}^{\top} & 1
    \end{pmatrix} \succeq 0.
\end{align}
The state-action constraints, the state-only constraints, and the terminal constraint in \eqref{eq:chance_constraints} are written as
\begin{align}\label{eq:const_2}
    \trace(H_{l} P_t) + f_l^{\top} \begin{pmatrix}
        \mean_{\st_t}\\\uz_t
    \end{pmatrix} \leq \beta_c,
\end{align}
for \(t = 1\ldots ,N-1\) with \(l = 1,\ldots,L_1\), and
\begin{align}\label{eq:const_3}
    \trace(H^{\st}_{l}P^{\st}_t) + \bigl(f^{\st}_l  \bigr)^{\top} \mean_{\st_t} \leq \beta_c^{\st},
\end{align}
for \(t=1,\ldots,\horizon\) with \(l=1,\ldots,L_2\), and \(\trace(\admfinst P_N) \leq \alpha\).
\begin{assumption}
    \label{assum:ab_st}
    We stipulate that the set
\begin{equation*}\label{eq:parameter_set}
      \left\{\hat{\var}_{\dist} \;\middle\vert\;  
    \begin{array}{@{}l@{}}
        \hat{\var}_{\dist} \succeq 0, \text{ for all }\var_{\dist} \preceq \pa \hat{\var}_{\dist} \text{ there}\\\text{exists }\PP_{\dist} \in \pset \text{ such that }\EE_{\PP_{\dist}} \expecof[\big]{\dist_t \dist_t^{\top}} = \var_{\dist}
        \end{array}
        \right\} \neq \emptyset.
\end{equation*}
\end{assumption}
Define the augmented variable \(\eta_t \Let \bigl(\uz_t,\mean_{\st_t}, P_t, \var^{\st}_{t},U_t\bigr)\) and the \emph{ambiguity parameter set} by
\begin{align}
    \amgball \Let \aset[\big]{\var_{\dist} \suchthat \var_{\dist} \succ 0,\var_{\dist} \preceq \gamma \hat{\var}_{\dist}}.
\end{align}

With Assumption \ref{assum:ab_st} in place \(\sisdp\)  version of \eqref{eq:DRMPC} is:
\begin{equation}
	\label{eq:SDSIP_DRMPC}
	\begin{aligned}
		& \min_{\eta_t} \max_{\var_w \in \amgball} &&  \sum_{t=0}^{\horizon-1}\trace \bigl(MP_t\bigr)+ \trace \bigl(S P^{\st}_{\horizon}\bigr) \\
		& \sbjto && 
			\text{constraints }\eqref{eq:st_cov_m1}-\eqref{eq:const_2}, \,\trace(\admfinst P_{\horizon}) \leq \alpha.
	\end{aligned}
\end{equation}
We denote the value function of the optimal control problem \eqref{eq:SDSIP_DRMPC} by \(\cst_{\horizon}\as(\cdot)\), which is a mapping from the set of all feasible initial states \(X_{\horizon}\) to real numbers.

Note that the \(\sisdp\) \eqref{eq:SDSIP_DRMPC} can be translated to a minimization problem from the min-max realization by adding a slack variable \(r_0 \in \lcro{0}{+\infty}\) without changing the value of the ensuing mathematical program. This generates an optimization problem of the form \eqref{eq:main SD-SIP} allowing us to directly apply all the machinery developed in \S \ref{sec:prob_form}. Let \(\ol{N} \Let \dim(\eta_t)+1\). Consider the relaxed optimization problem
\begin{align}
    \label{eq:relaxed_DRMPC}
    \ol{\gfunc}(\var^1_{\dist},\ldots, \var^{\ol{N}}_{\dist};\xz)  =&  \inf_{r_0, (\eta_t)_{t=0}^{\horizon-1}} && r_0 \\
		& \sbjto && 
			\hspace{-3mm}\begin{cases}
            \sum_{t=0}^{\horizon-1}\trace \bigl(MP_t\bigr)+ \trace \bigl(S P^{\st}_{\horizon}\bigr) \leq r_0 \\ \text{for all }\var^1_{\dist},\ldots,\var^{\ol{\horizon}}_{\dist} \in \amgball,\\
            \text{constraints }\eqref{eq:st_cov_m1}-\eqref{eq:const_2}, \, \\
            r_0 \in \lcro{0}{+\infty},\,\trace(\admfinst P_{\horizon}) \leq \alpha.\nn
            \end{cases}
\end{align}
\begin{corollary}
    \label{cor:value_func_equality}
    Consider the OCP \eqref{eq:SDSIP_DRMPC} and suppose that Assumption \ref{assum:slater's} and Assumption \ref{assum:ab_st} are in force. Fix an \(\xz \in X_{\horizon}\). Consider the global maximization problem
    \begin{align}
        \label{eq:g_func}
\sup_{(\var^1_{\dist},\ldots,\var^{\ol{\horizon}}_{\dist}) \in \amgball^{\ol{\horizon}}}
        \ol{\gfunc}(\var^1_{\dist},\ldots,\var^{\ol{\horizon}}_{\dist};\xz).
    \end{align}
    Then there exists \(\bigl(\var^1_{\dist},\ldots, \var^{\ol{N}}_{\dist}\bigr)\) that solves \eqref{e:global_max_prob}, and we have \(\cst_{\horizon}\as(\xz)=\ol{\gfunc}(\var^1_{\dist},\ldots, \var^{\ol{N}}_{\dist};\xz)\).
\end{corollary}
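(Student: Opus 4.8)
The plan is to recognize \eqref{eq:SDSIP_DRMPC} as a concrete instance of the abstract $\sisdp$ \eqref{eq:main SD-SIP} and then quote Theorem~\ref{thrm:mtns:value_func_equality} and Proposition~\ref{prop:regularity of G} without re-proving anything. As already noted, the $\min$-$\max$ problem \eqref{eq:SDSIP_DRMPC} is equivalent, after introducing the slack $r_0 \in \lcro{0}{+\infty}$ and passing to the epigraph, to a pure minimization whose decision vector is $(r_0,(\eta_t)_{t=0}^{\horizon-1})$ of dimension $\ol N = \dim(\eta_t)+1$, whose objective $r_0$ is linear, and whose constraint system \eqref{eq:st_cov_m1}--\eqref{eq:const_2} together with $\trace(\admfinst P_{\horizon}) \le \alpha$ is a semi-infinite matrix-inequality system parametrized by $\var_{\dist}$ ranging over $\amgball$. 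Under this identification the relaxed problem \eqref{eq:relaxed_DRMPC} is exactly the relaxed-feasibility/value-function construction \eqref{eq:si-sdp feas}--\eqref{eq:minimization problem} for this instance, with $\ol\gfunc(\cdot;\xz)$ playing the role of $\gfunc$ and $\amgball$ playing the role of the constraint index set of \eqref{eq:mtns:CSIP_original}.

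First I would check that the data of this instance meets the standing hypotheses of \S\ref{sec:prob_form}. The set $\amgball$ is convex and bounded, since $0 \prec \var_{\dist} \preceq \gamma\hat{\var}_{\dist}$ forces the (diagonal) entries of $\var_{\dist}$ into bounded intervals; replacing $\amgball$ by its closure $\aset[\big]{\var_{\dist}\suchthat 0 \preceq \var_{\dist}\preceq \gamma\hat{\var}_{\dist}}$ gives a compact convex set which, because $\hat{\var}_{\dist}$ is positive definite, has nonempty interior. This passage to the closure changes neither the value of \eqref{eq:SDSIP_DRMPC} nor that of the global maximization \eqref{eq:g_func}: the constraint functions depend continuously --- indeed affinely, once $\var_{\dist}$ is coordinatized by the standard deviations $\va_{\dist,j}^{1/2}$ that actually occur in \eqref{eq:st_cov_m1}--\eqref{eq:st_cov_m2} --- on $\var_{\dist}$, and the relaxed SDP is strictly feasible. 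Here Assumption~\ref{assum:ab_st} is precisely what guarantees that every $\var_{\dist}$ in this set is realized as the second moment of some $\PP_{\dist}\in\pset$, so that the ambiguity set \eqref{eq:ambg_set} is nonempty and the $\sisdp$ reformulation is lossless; and Assumption~\ref{assum:slater's}, read for the instance just constructed, furnishes an $\ol N$-tuple $(\var^1_{\dist},\ldots,\var^{\ol N}_{\dist})$ for which the feasible set of \eqref{eq:relaxed_DRMPC} is nonempty --- it being an intersection of finitely many affine matrix-inequality constraints with the half-line $r_0 \ge 0$, it is automatically closed and convex.

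Granting this identification, the two conclusions of the corollary are then read off. Existence of a maximizer $(\var^1_{\dist},\ldots,\var^{\ol N}_{\dist})$ of \eqref{eq:g_func} follows from upper semicontinuity of $\ol\gfunc(\cdot;\xz)$ --- the upper-semicontinuity part of Proposition~\ref{prop:regularity of G}, applied to this instance --- together with compactness of $\amgball^{\ol N}$ (in its closed form) and the Weierstrass theorem, exactly as in the first assertion of Theorem~\ref{thrm:mtns:value_func_equality}; if desired, convexity of $\ol\gfunc(\cdot;\xz)$ follows likewise from the convexity part of Proposition~\ref{prop:regularity of G} since $\amgball$ is convex. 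For the value equality, since the relaxed feasible set for this instance is nonempty (by Assumption~\ref{assum:slater's}), closed and convex, the second assertion of Theorem~\ref{thrm:mtns:value_func_equality} (equivalently \cite[Theorem~1]{ref:DasAraCheCha-22}) yields that the optimal value of \eqref{eq:main SD-SIP} instantiated with this data equals $\ol\gfunc(\var^1_{\dist},\ldots,\var^{\ol N}_{\dist};\xz)$; and since the epigraph reformulation is value-preserving, that value is $\cst_{\horizon}\as(\xz)$, which is the assertion.

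I expect the only genuine work to lie in the middle paragraph: verifying that $\var_{\dist}$ enters \eqref{eq:st_cov_m1}--\eqref{eq:st_cov_m2} through $\va_{\dist,j}^{1/2}$ in a way that keeps the constraint system affine (hence convex) in the joint variable $(\eta_t,\var_{\dist})$, and that enlarging the ambiguity-parameter set to the compact convex $\aset[\big]{\var_{\dist}\suchthat 0\preceq\var_{\dist}\preceq\gamma\hat{\var}_{\dist}}$ costs nothing --- this is exactly where strict feasibility of the relaxed SDP and the realizability condition of Assumption~\ref{assum:ab_st} are used. Once the problem is matched to the template \eqref{eq:mtns:CSIP_original}, no new analysis is required and Theorem~\ref{thrm:mtns:value_func_equality} together with Proposition~\ref{prop:regularity of G} applies off the shelf.
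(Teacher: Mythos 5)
Your proposal is correct and follows the same route as the paper, which disposes of the corollary in one line by declaring it an immediate consequence of Theorem~\ref{thrm:mtns:value_func_equality}. The verification work you carry out in the middle paragraph --- matching \eqref{eq:SDSIP_DRMPC} to the template \eqref{eq:main SD-SIP}, noting that \(\amgball\) as written (with \(\var_{\dist}\succ 0\)) is not closed and must be replaced by its compact closure before Weierstrass applies, and checking where Assumptions~\ref{assum:slater's} and~\ref{assum:ab_st} enter --- is detail the paper omits entirely, and it is worth having.
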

\begin{proof}
    A proof of Corollary \ref{cor:value_func_equality} follows immediately from Theorem \ref{thrm:mtns:value_func_equality}. 
\end{proof}

\subsection{Algorithm to solve \eqref{eq:g_func}}\label{subsec:alg_drpmc} 
 Algorithm \ref{alg:mtns:sabaro} solves the global maximization problem \eqref{eq:g_func} which is a crucial step in solving \eqref{eq:SDSIP_DRMPC}. We define some notations that will be useful in Algorithm \ref{alg:mtns:sabaro}: 
 \(\ol{\var} \Let \bigl(\var^1_{\dist},\ldots, \var^{\ol{N}}_{\dist} \bigr)\)
 and \(\beta_t \Let \bigl(r_0,(\eta_{t})_{t=0}^{\horizon-1}\bigr)\). \footnote{Throughout the algorithm, all superscripts correspond to the iteration of the global optimization routine.}

\begin{algorithm}
	\SetAlgoLined
  	\DontPrintSemicolon
	\SetKwInOut{ini}{Initialize}
    \SetKwInOut{giv}{Data}
    \giv{Stopping criterion $\SC(\cdot)$, threshold for the stopping criterion \(\tau\) and fix \(\xz \in X_{\horizon}\);}
    \ini{initialize constraint indices  \(\ol{\var}^0 \Let \Bigl(\var^{1,0}_{\dist},\ldots, \var^{\ol{N},0}_{\dist} \Bigr) \in  \amgball^{\ol{\horizon}}\)
 and the solution \(\beta^0_t \Let \bigl(r^0_0,(\eta^{0}_{t})_{t=0}^{{\horizon}-1}\bigr)\) to \eqref{eq:relaxed_DRMPC}, initial guess for maximum value \(\ol{\gfunc}_{\max}\), initial guess for the initial solution \(\ol{\beta}\);}
	\BlankLine
	\While{$\SC(k) \leqslant \tau$}{
		\emph{Sample}: \(\ol{\var}^k  \in \amgball^{\ol{\horizon}}\)\;
		\emph{Evaluate} \(\ol{\gfunc}^k \Let \ol{\gfunc}(\ol{\var}^{k};\xz)\) as defined in \eqref{eq:g_func} \;
		\emph{Recover} the solution  \(\beta^k \in \argmin_{\bigl(r^k_0,(\eta^k_t)_{t=0}^{{N}-1}\bigr)} \aset[\big]{r_0 \,\big|\,\text{constraints in } \eqref{eq:relaxed_DRMPC} \text{ hold at }\ol{\var}^{k}} \)
        \;
		\uIf{$\ol{\gfunc}^k \geqslant \ol{\gfunc}_{\max}$}{Set $\ol{\gfunc}_{\max} \gets \ol{\gfunc}^k$\;
        Set ${\ol{\beta}} \leftarrow \beta^k$}
        Update $k \gets k+1$ \;
	}
\caption{Algorithm to solve \eqref{eq:g_func}}\label{alg:mtns:sabaro}
\end{algorithm}

\begin{remark}\label{rem:comparision_alg_1_and_alg_2}
(On Algorithm \ref{alg:mtns:sisdp} and Algorithm \ref{alg:mtns:sabaro})  Algorithm \ref{alg:mtns:sabaro} is a specialized version of the general Algorithm \ref{alg:mtns:sisdp} established in \S\ref{sec:prob_form} in the context of the DRPMC problem. For the original \(\sisdp\) problem \eqref{eq:main SD-SIP} the semi-infinite parameters are \(\sParam \Let \bigl((\uvar_1 \; \uvar_2 \; \ldots \; \uvar_n), \uvarr\bigr) = \bigl(\uvar,\uvarr \bigr)\), whereas for the DRMPC problem \eqref{eq:SDSIP_DRMPC}, \(\var_w \) is the semi-infinite parameter. Subsequently, the relaxed inner-minimization problem \eqref{eq:minimization problem} was a function of \(\ol{\sParam} \Let (d_1,d_2, \dots,\sParam_{\ol{N}})\) \emph{finite samples} of the parameters; similarly, for the problem \eqref{eq:relaxed_DRMPC}, \(\ol{\gfunc}(\cdot)\) is a function of \((\var^1_{\dist},\ldots, \var^{\ol{N}}_{\dist})\) samples. Finally, an appropriate global maximization is solved (\eqref{e:global_max_prob} for the general \(\sisdp\) problem and \eqref{eq:g_func} for the DRMPC problem \eqref{eq:SDSIP_DRMPC}) to find the optimizers.
\end{remark}
\section{Numerical experiment}
We consider the following second-order discrete-time controlled dynamical system~\cite{ref:JAP:CS:SMPC:TAC:2010} with an i.i.d. process noise \(w_t \sim \mathcal{N}(0,1)\) that multiplies both \(\st_t\) and \(u_t\):
\begin{align}\label{num:mtns:sys_dyn}
    x_{t+1} = Ax_t + Bu_t + \bigl( C x_t + D u_t\bigr)w_t,
\end{align}
where the corresponding matrices in \eqref{num:mtns:sys_dyn} are given as: 
\begin{align}
    A \Let \begin{pmatrix}
        1.02 & -0.1 \\ 0.1 & 0.98
    \end{pmatrix}, \quad B \Let \begin{pmatrix}
        0.10 & 0 \\ 0.05 & 0.01
    \end{pmatrix}, \nn  \\ 
    C \Let \begin{pmatrix}
        0.04 & 0 \\ 0 & 0.04
    \end{pmatrix}, \quad D\Let \begin{pmatrix}
        0.04 & 0 \\ -0.04 & 0.008
    \end{pmatrix}. \nn
\end{align}
The state, the control weighting matrices, and the matrix \(\admfinst\) are chosen as 
\begin{align}
     Q \Let \begin{pmatrix}
        2 & 0 \\ 0 & 1
    \end{pmatrix}, \,\, R \Let \begin{pmatrix}
        5 & 0 \\ 0 & 20
    \end{pmatrix}, \,\,\admfinst \Let \begin{pmatrix}
        41.0331 & -5.7929 \\ -5.7929 & 54.3889
    \end{pmatrix}. \nn
\end{align}
For simplicity, we enforce only state constraints 
\begin{equation}\label{eq:chance_const}
\begin{aligned}
\mathcal{C}_{\dummyx}(\xz) \Let
  \EE_{\xz} \bigl[ (-2\,1)^{\top}x_t\bigr] \le 2.3, 
\end{aligned}
\end{equation} 
and the terminal constraint is \(\EE_{\xz}[x_{\horizon}\admfinst x_{\horizon}] \le 45\). 

\begin{figure}[H]
    \centering
    \includegraphics[scale=0.6]{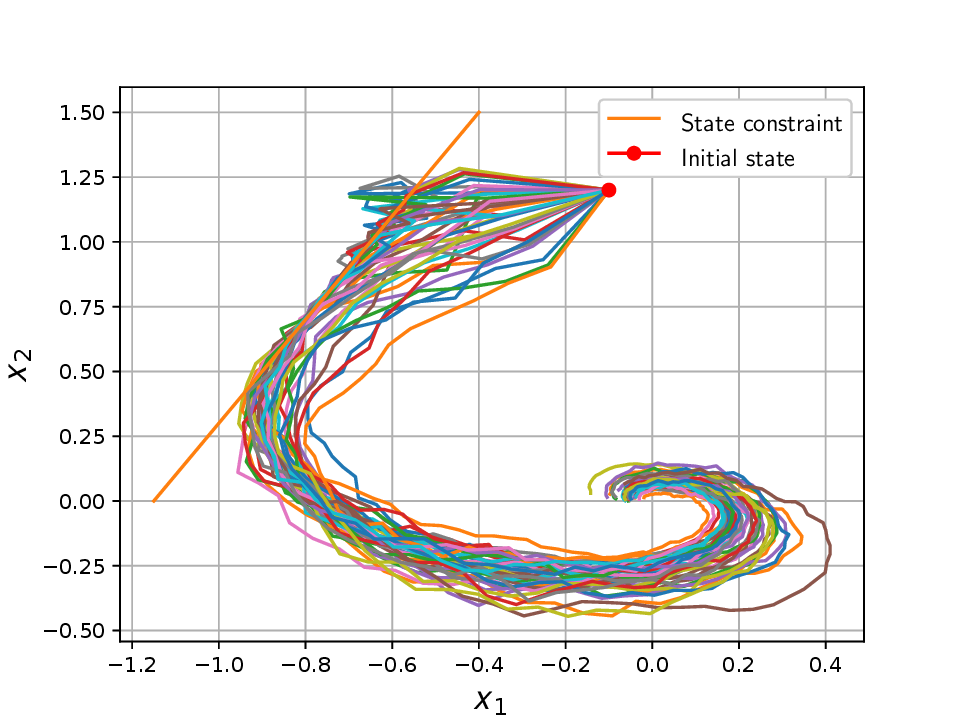}
    \caption{Phase portrait of the trajectories using our DRMPC Algorithm \ref{alg:mtns:sabaro}. It can be seen that the state trajectories are getting attracted towards the origin.}
    \label{fig:stable_effect}
\end{figure}
With these ingredients we considered the DRMPC problem of the form in \eqref{eq:DRMPC} which we reformulated and arrived at a problem of the form given in \eqref{eq:SDSIP_DRMPC} and subsequently employed Algorithm~\ref{alg:mtns:sabaro} to solve the ensuing \(\sisdp\). We performed our numerical experiment using Python 3.10 running on a \(36\) core server with Intel(R) Xeon(R) CPU E\(5-2699\) v\(3\), \(4.30\) GHz with \(32\) Gigabyte of RAM. We employed CVXPY with the MOSEK~\cite{ref:mosek} solver for solving semi-definite programs. To simulate multiple trajectories simultaneously we used the multiprocessing library. We generated 30 random samples using the statistics of process noise beforehand to emulate the availability of historical data for some practical application and use these samples to obtain a realistic estimate of the covariance \(\hat{\var}_{\dist} = 1.04\).To obtain a distributionally robust policy for the system~\eqref{num:mtns:sys_dyn}, we use the ambiguity set as defined in~\eqref{eq:ambg_set} using the estimate \(\hat{\var}_{\dist}\). With this data, \(40\) trajectories were generated starting from the initial state \(\ol{\st} = (0.1 \;1.2)^{\top}\). Figure \ref{fig:stable_effect} depicts the stabilizing characteristics of the DRMPC when initialized with \(\ol{\st} = (0.1 \;1.2)^{\top}\). 

\section{Concluding remarks}
This article established an algorithm for near-optimal solution of SI-SDPs and its application to DRMPC problems. The underlying discrete-time dynamical system contains uncertainties that multiply both the system state and control variables which are typically the governing dynamics in many finance applications. By introducing distributional uncertainty to a given SMPC we converted the ensuing DRMPC problem to an SI-SDP and applied Algorithm \ref{alg:mtns:sabaro} to cater to the DRMPC problem. This article reports our preliminary results on this front, in particular, stability and feasibility guarantees were not reported here. The immediate next step would be to develop the stability theory and establish an explicit synthesis algorithm for fast implementation; these results will be reported in our subsequent investigations.

\bibliographystyle{IEEEtran}
\bibliography{refs.bib}
\end{document}